\newtheorem{Thm}{Theorem}
\newtheorem{Lem}[Thm]{Lemma}
\newtheorem{Pro}[Thm]{Proposition}
\newtheorem{Rem}[Thm]{Remark}
\newcommand{\R}{\mathbb{R}}
\definecolor{DarkGreen}{rgb}{0,0.5,0.1} 
\newcommand\soutD{\bgroup\markoverwith
{\textcolor{DarkGreen}{\rule[.5ex]{2pt}{1pt}}}\ULon}
\newcommand{\Hm}[1]{\leavevmode{\marginpar{\tiny%
$\hbox to 0mm{\hspace*{-0.5mm}$\leftarrow$\hss}%
\vcenter{\vrule depth 0.1mm height 0.1mm width \the\marginparwidth}%
\hbox to
0mm{\hss$\rightarrow$\hspace*{-0.5mm}}$\\\relax\raggedright #1}}}
\title{A note on the failure of the Faber-Krahn  inequality for the vector Laplacian}
\author{David Krej\v{c}i\v{r}\'{\i}k,$^a$ 
Pier Domenico Lamberti\,$^b$  
and  Michele Zaccaron\,$^c$ }
\date{\small 
\vspace{-5ex}
\begin{quote}
\emph{
\begin{itemize}
\item[$a)$] 
Department of Mathematics, Faculty of Nuclear Sciences and 
Physical Engineering, Czech Technical University in Prague, 
Trojanova 13, 12000 Prague 2, Czech Republic;
david.krejcirik@fjfi.cvut.cz.%
\item[$b)$] 
Dipartimento di Tecnica e Gestione dei Sistemi Industriali (DTG),
University of Padova,
Stradella S. Nicola 3, 36100 Vicenza, Italy;
pierdomenico.lamberti@unipd.it
\item[$c)$] 
Institut Fresnel, 
Facult\'e des Sciences - Campus Saint J\'er\^ome, 
Avenue Escadrille Normandie-Ni\'emen, 
13397 Marseille CEDEX, France;
zaccaron@fresnel.fr
\end{itemize}
}
\end{quote}
October 1, 2024
}
\begin{document}

\maketitle

\abstract{We consider a natural eigenvalue problem for the vector Laplacian related to stationary Maxwell's equations in a cavity and we prove that an analog of the celebrated Faber-Krahn inequality doesn't hold,  regardless of whether a volume or perimeter constraint is applied.}

\section{Introduction}

In this paper  we consider  the  eigenvalue problem for the $\operatorname{curl}\operatorname{curl}$ operator 
\begin{equation} \label{el:maxwell:pb}
\begin{cases}
\operatorname{curl}\operatorname{curl}u=\lambda u &\text{in }\Omega,\\
\operatorname{div}u=0 &\text{in }\Omega,\\
u \times \nu =0 &\text{on }\partial\Omega, \\
\end{cases}
\end{equation}
on bounded domains $\Omega$ in $\R^3$ with Lipschitz boundaries, in the unknown vector field $u~:~\Omega \to \R^3$ (the eigenvector) and the unknown $\lambda \in {\mathbb{R}}$ (the eigenvalue).
Here $\nu$ denotes the unit outer normal to $\partial\Omega$. 
Recall that $
\operatorname{curl}\operatorname{curl}u =-\Delta u +\nabla \operatorname{div}u $ hence $
\operatorname{curl}\operatorname{curl}u =-\Delta u$ if $u$ is a divergence-free vector field. Note also that the second condition in \eqref{el:maxwell:pb} is immediately implied by the first equation if $\lambda \ne 0$. 
Problem \eqref{el:maxwell:pb} admits  a sequence of eigenvalues  that are ordered in increasing order, taking into account their multiplicity, as follows: 
\begin{equation*}
0 \leq \lambda_1^\Omega \leq \lambda_2^\Omega \leq \dots \lambda_j^\Omega \leq \dots \nearrow +\infty.
\end{equation*}
We refer to \cite{lamzac21} for  basic results concerning problem \eqref{el:maxwell:pb} and  for references.

We assume that  the boundary of  $\Omega$ has only one connected component so that  zero is not an eigenvalue and $\lambda_1^{\Omega}>0$. 
Under these assumptions on $\Omega$,  
we consider the problem of minimizing and maximizing $\lambda_1^{\Omega}$ under  the volume constraint $|\Omega|={\rm constant}$ or the perimeter constraint $|\partial \Omega|={\rm constant}$ and we prove that 
\begin{equation}\label{inf}
\inf_{|\Omega|=\text{const.}} \lambda_1^\Omega =0,\ \ \inf_{  |\partial \Omega |=\text{const.}} \lambda_1^\Omega =0\end{equation}
and 
\begin{equation}\label{sup}\sup_{|\Omega|=\text{const.}} \lambda_1^\Omega =+\infty, \ \ \sup_{ |\partial \Omega |=\text{const.}} \lambda_1^\Omega =+\infty.\end{equation}
Note that $|\Omega|$ denotes the  volume of $\Omega$, while $|\partial \Omega|$ the two-dimensional surface measure of $\partial\Omega$.

The first equality in  \eqref{inf} and the equalities in \eqref{sup} will be  easily proved by considering suitable families of cuboids, i.e., rectangular parallelepipeds,   in which case explicit formulas are known. By using the same formulas,  it turns our that 
\begin{equation}\label{lower}
\inf_{\substack{\Omega\ \text{cuboid} \\ |\partial\Omega|=k}} \lambda_1^\Omega =\frac{4 \pi^2}{k},
\end{equation}
for all $k>0$, see Theorem~\ref{theo:per}.  Thus,  the second equality in  \eqref{inf}  will be  proved by considering another family of non-convex domains, namely suitable three dimensional  dumbbell domains: in this case, we shall use  the full description of the spectrum of problem  \eqref{el:maxwell:pb} in cross product domains provided by \cite{cosdau}.

It is interesting to observe that if $\Omega$ is a ball with surface area equal to $k$ then its first eigenvalue is larger than the lower bound in \eqref{lower}, see Remark~\ref{ball}.  The reader interested in explicit computations of Maxwell's eigenvalues  for analogous boundary value problems can find more results and formulas in \cite{ferlamstra}.

The main  physical motivation for studying problem \eqref{el:maxwell:pb} comes from the analysis of  the stationary Maxwell's equations,  in which case $u$ plays the role of an electric field in a cavity $\Omega$ surrounded by a perfect conductor $\partial\Omega$ that is responsible for  the boundary condition. In particular, the study of electromagnetic cavities has applications in designing cavity resonators or shielding structures for electronic circuits.   We refer to  \cite[Chp.~10]{hanyak} for a detailed  introduction to this subject and to the  extensive monographs \cite{kihe, rsy}  on the mathematical theory of electromagnetism.  

We note that the energy space naturally associated with problem \eqref{el:maxwell:pb} is $X_N(\operatorname{div} 0, \Omega):=H_0(\operatorname{curl}, \Omega )\cap H( \operatorname{div} 0, \Omega)$ that is defined as the space of divergence free vector fields $u$ in $L^2(\Omega)^3$ with $\operatorname{curl} u$ in $L^2(\Omega)^3$, satisfying the boundary condition in  \eqref{el:maxwell:pb} (as usual, the boundary condition is understood 
in the weak sense by defining $H_0(\operatorname{curl}, \Omega )$ as the closure in $H(\operatorname{curl}, \Omega )$  of the space of smooth vector fields with compact support). 
In particular it turns out that 
$$
 \lambda_1^\Omega =\min_{\substack{u\in X_N(\operatorname{div} 0, \Omega)\\ u\ne 0}} \frac{\int_{\Omega}|\operatorname{curl} u |^2dx }{\int_{\Omega}|u|^2dx}\, .  $$

It is  clear that the eigenvalue problem  \eqref{el:maxwell:pb} is the natural vectorial version of the eigenvalue problem for the Dirichlet Laplacian 
in which case the energy space is the usual Sobolev space $H^1_0(\Omega)$. For the Dirichlet Laplacian, the celebrated Faber-Krahn inequality states that the ball minimizes the first eigenvalue under volume constraint, see e.g., \cite{henrot}. Thus, our observations  point out that, although the  ball is critical for the elementary symmetric functions of the eigenvalues of  \eqref{el:maxwell:pb} under both volume and perimeter constraint (cf. \cite{lamzac21}), the Faber-Krahn inequality doesn't hold, no matter whether the volume or the perimeter constraint is used. On the other hand the lower bound in \eqref{lower} suggests that the problem of minimizing the first eigenvalue under perimeter constraint and additional geometric constraints could be of interest. With regard to this, we observe that it has been proved in \cite{pauly} that in case of bounded convex domains the first non-zero eigenvalue $\mu_{1,\mathcal{N}}^{\Omega}$ of the Neumann Laplacian in $\Omega$ satisfies the estimate 
$\mu_{1,\mathcal{N}}^{\Omega} \le \lambda_1^{\Omega}$, and a classical inequality by   Payne and Weinberger~\cite{payne} states that if $\Omega$ is convex then  $\mu_{1,\mathcal{N}}^{\Omega} \geq \pi^2/d^2$ where $d$ is the diameter of $\Omega$. See also \cite{roh} for further references  and  inequalities between Maxwell's eigenvalues and the eigenvalues of the Dirichlet Laplacian. 

We observe  that $\lambda_1^{\Omega}$ coincides with the first eigenvalue $\mu_1^{[2]}$ of the Hodge Laplacian acting on $2$-forms with so-called absolute boundary conditions, see  \cite{savo} for notation. By using this non-trivial fact, it is possible to give another proof  of the first equality in  \eqref{inf} and the equalities in \eqref{sup} based the general two-sided estimate proved for convex domains in \cite[Thm.~1.1]{savo}. We explain how to do that in  Remark~\ref{savorem}.  As far as this approach is concerned, we note that the first equalities in  \eqref{inf} and \eqref{sup} for the limiting behavior of $\mu_1^{[2]}$ to zero or infinity under volume constraint, can also be found in \cite[\S~5]{guesav}.

Finally, we note that the failure of the Faber-Krahn inequality for a different vectorial eigenvalue problem associated with the Stokes operator has been recently discussed in \cite{henmazpri}.

\section{Cuboids}

In this section we consider cuboids  $\Omega$ in $\R^3$ with  sides of length $\ell_1,\ell_2,\ell_3$ that are always assumed to be  in the order $\ell_1\geq\ell_2 \geq \ell_3$. Although it is not necessary, the reader may think of using a system of coordinates that allows to represent $\Omega$ in the form $\Omega=(0,l_1)\times (0,l_2)\times (0,l_3)$. 
It is well-known  that 
\begin{equation*}
\lambda_1^{\Omega} = \pi^2 \left( \frac{1}{\ell_1^2} + \frac{1}{\ell_2^2} \right),
\end{equation*}
see e.g.,  \cite{cosdau}.
This formula allows to give an immediate proof of the following theorem concerning the case of the volume constraint. 

\begin{Thm}  For any fixed $k>0$ we have
$$\inf_{\substack{\Omega \text{\ \rm  cuboid} \\ |\Omega|=k}} \lambda_1^\Omega=0, \qquad  \sup_{\substack{\Omega \text{\ \rm  cuboid} \\ |\Omega|=k}} \lambda_1^\Omega=+\infty .$$
\end{Thm}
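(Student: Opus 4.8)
The plan is to exploit the explicit formula $\lambda_1^\Omega = \pi^2(\ell_1^{-2}+\ell_2^{-2})$ recalled just above the statement, together with the fact that the three side lengths $\ell_1\geq\ell_2\geq\ell_3$ of a cuboid are free subject only to the single constraint $\ell_1\ell_2\ell_3=k$. Since $\lambda_1^\Omega$ depends only on the two \emph{longest} sides, the idea is: to obtain the infimum, push both $\ell_1$ and $\ell_2$ to infinity while letting $\ell_3$ collapse so as to keep the product equal to $k$; to obtain the supremum, push $\ell_2$ (and hence, by the ordering, also $\ell_3$) to zero, which forces $\ell_1$ to blow up.

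For the infimum, I would consider, for each $R\geq k^{1/3}$, the cuboid with $\ell_1=\ell_2=R$ and $\ell_3=k R^{-2}$. The ordering $\ell_1\geq\ell_2\geq\ell_3$ holds precisely because $R^3\geq k$, and $|\Omega|=\ell_1\ell_2\ell_3=k$ by construction. The formula then gives $\lambda_1^\Omega=2\pi^2 R^{-2}$, which tends to $0$ as $R\to+\infty$. Since $\lambda_1^\Omega>0$ for every admissible $\Omega$ (the boundary having a single connected component, as assumed throughout), this shows that the infimum equals $0$ and is not attained.

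For the supremum, I would consider, for each $\varepsilon\in(0,k^{1/3}]$, the cuboid with $\ell_1=k\varepsilon^{-2}$ and $\ell_2=\ell_3=\varepsilon$. Again the ordering holds because $k\varepsilon^{-2}\geq\varepsilon$ is equivalent to $k\geq\varepsilon^3$, and $|\Omega|=k$. Now $\lambda_1^\Omega=\pi^2\bigl(k^{-2}\varepsilon^4+\varepsilon^{-2}\bigr)\geq \pi^2\varepsilon^{-2}\to+\infty$ as $\varepsilon\to 0^+$, which gives the second claim.

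I do not expect any genuine obstacle here: the proof is essentially immediate from the formula. The only point that requires a moment's care is to arrange the chosen families so that they respect the convention $\ell_1\geq\ell_2\geq\ell_3$ under which the formula for $\lambda_1^\Omega$ is valid, which is exactly why the parameters are restricted to $R\geq k^{1/3}$ and $\varepsilon\leq k^{1/3}$, respectively.
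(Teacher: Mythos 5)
Your proof is correct and follows essentially the same route as the paper: the paper uses the same two families of cuboids (two long equal sides with a thin third for the infimum, one long side with two thin equal sides for the supremum), merely normalizing to $|\Omega|=1$ first and then invoking the scaling $\lambda_1^{\alpha\Omega}=\lambda_1^{\Omega}/\alpha^2$, whereas you parametrize directly at volume $k$. The care you take with the ordering $\ell_1\geq\ell_2\geq\ell_3$ matches the paper's conventions.
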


\begin{proof} 
We begin by considering  cuboids $\Omega$ with $|\Omega|=1$. 
Fix $0<\ell\leq 1$ and take $\ell_3=\ell$ and $\ell_1=\ell_2=\ell^{-1/2}$. The volume of such a  cuboid is equal to $1$ and the first Maxwell eigenvalue satisfies
\begin{equation}\label{zero}
\lambda_1^{\Omega}=\pi^2\left( \frac{1}{\ell_1^2} +\frac{1}{\ell_2^2}\right)= 2 \pi^2 \ell \xrightarrow[\ell \to 0^+]{}0^+.
\end{equation}
If instead we take  $\ell_2=\ell_3 = \ell $ and $\ell_1=\ell^{-2}$, then we have again that $|\Omega|=1$  and 
\begin{equation}\label{infinito}
\lambda_1^{\Omega}=\pi^2\left( \frac{1}{\ell_1^2} +\frac{1}{\ell_2^2}\right)= \pi^2 \left(\ell^4 + \frac{1}{\ell^2} \right) \xrightarrow[\ell \to 0^+]{} +\infty.
\end{equation}
Thus the theorem is proved for  cuboids satisfying $|\Omega|=1$.   The general case of arbitrary fixed volumes, can be proved by observing  that  
$\lambda_1^{\alpha\Omega}=\lambda_1^\Omega/ \alpha^2 $ for any $\alpha >0$. 
\end{proof}

As far as the perimeter constraint is concerned, 
we can prove the following theorem.

\begin{Thm} \label{theo:per}
For any fixed $k>0$ we have
\begin{equation}\label{cubper}
\inf_{\substack{\Omega \text{\ \rm   cuboid} \\ |\partial\Omega|=k }} \lambda_1^\Omega= \frac{4 \pi^2}{k},\ \ \sup_{\substack{\Omega \text{\ \rm  cuboid}  \\ |\partial\Omega|=k}} \lambda_1^\Omega=+\infty
\end{equation}
and the infimum is not attained. 
\end{Thm}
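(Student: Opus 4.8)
The plan is to work directly with the formula $\lambda_1^\Omega = \pi^2(\ell_1^{-2} + \ell_2^{-2})$ together with the perimeter expression $|\partial\Omega| = 2(\ell_1\ell_2 + \ell_1\ell_3 + \ell_2\ell_3)$, and first reduce to the normalized case $k = 1$ by scaling: since $\lambda_1^{\alpha\Omega} = \lambda_1^\Omega/\alpha^2$ and $|\partial(\alpha\Omega)| = \alpha^2|\partial\Omega|$, one sees that $\inf\{\lambda_1^\Omega : |\partial\Omega| = k\} = \frac{1}{k}\inf\{\lambda_1^\Omega : |\partial\Omega| = 1\}$, and likewise the supremum scales as $1/k$ times the (infinite) supremum, so it suffices to prove the two claims for $k=1$ and then read off $4\pi^2/k$.

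For the supremum, I would reuse the family from \eqref{infinito}-type degeneration: take $\ell_2 = \ell_3 = \ell$ small and choose $\ell_1$ so that the perimeter equals $1$, i.e. $2(\ell_1\ell \cdot 2 + \ell^2) = 1$, giving $\ell_1 = \frac{1 - 2\ell^2}{4\ell} \to +\infty$ as $\ell \to 0^+$. Then $\lambda_1^\Omega = \pi^2(\ell_1^{-2} + \ell^{-2}) \ge \pi^2 \ell^{-2} \to +\infty$, which settles the second equality in \eqref{cubper}.

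For the infimum, the natural strategy is a constrained optimization / estimate argument: among cuboids with $|\partial\Omega|=1$, one wants to show $\pi^2(\ell_1^{-2} + \ell_2^{-2}) \ge 4\pi^2$ with the bound approached but not attained. The key algebraic step is to show that $\ell_1\ell_2 + \ell_1\ell_3 + \ell_2\ell_3 = \tfrac12$ forces $\ell_1^{-2} + \ell_2^{-2} \ge 4$. Since $\ell_1 \ge \ell_2 \ge \ell_3 > 0$, the perimeter constraint gives $\ell_1\ell_2 \le \ell_1\ell_2 + \ell_1\ell_3 + \ell_2\ell_3 = \tfrac12$, hence $\ell_1\ell_2 \le \tfrac12$; combined with $\ell_2 \le \ell_1$ this yields $\ell_2^2 \le \ell_1\ell_2 \le \tfrac12$, so $\ell_2^{-2} \ge 2$, and similarly I would extract a second factor of $2$ — here the clean way is to note $\ell_1^{-2} + \ell_2^{-2} \ge 2/(\ell_1\ell_2) \ge 4$ by AM–GM applied to $\ell_1^{-2}, \ell_2^{-2}$ and then $\ell_1\ell_2 \le \tfrac12$. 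This gives $\lambda_1^\Omega \ge 4\pi^2$ on the normalized family. To see the value $4\pi^2$ is the infimum (and not attained), I would exhibit a minimizing sequence: take $\ell_1 = \ell_2 = s$ and $\ell_3 = t$ with $s \ge t > 0$ and $s^2 + 2st = \tfrac12$; as $t \to 0^+$ we get $s \to \tfrac{1}{\sqrt2}$, so $\lambda_1^\Omega = 2\pi^2 s^{-2} \to 4\pi^2$. Equality $\lambda_1^\Omega = 4\pi^2$ in the lower bound would require simultaneously $\ell_1 = \ell_2$ (equality in AM–GM) and $\ell_1\ell_2 = \tfrac12$ (equality in the perimeter estimate), the latter forcing $\ell_1\ell_3 + \ell_2\ell_3 = 0$, i.e. $\ell_3 = 0$, which is not an admissible cuboid; hence the infimum is not attained.

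The main obstacle — really the only nontrivial point — is making the lower bound $\ell_1^{-2} + \ell_2^{-2} \ge 4$ fully rigorous from the single inequality $\ell_1\ell_2 \le \tfrac12$; everything else is scaling and explicit one-parameter families. One must be careful that the ordering convention $\ell_1 \ge \ell_2 \ge \ell_3$ is exactly what makes $\ell_1\ell_2$ the largest of the three products, so that dropping the other two terms only decreases the left side; and one should double-check that the AM–GM step does not secretly need more than $\ell_1\ell_2 \le \tfrac12$. After that, undoing the normalization multiplies the bound by $1/k$, yielding $\inf = 4\pi^2/k$ as claimed in \eqref{lower} and \eqref{cubper}.
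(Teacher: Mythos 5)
Your proposal is correct and follows essentially the same route as the paper: both proofs rest on the Young/AM--GM inequality $\ell_1^{-2}+\ell_2^{-2}\ge 2/(\ell_1\ell_2)$ combined with the perimeter constraint to get the strict lower bound $4\pi^2/k$, both exhibit the same minimizing sequence of square-based boxes with $\ell_1=\ell_2$ and $\ell_3\to 0^+$, and both blow up the supremum with thin elongated boxes $\ell_2=\ell_3=\ell\to 0^+$. Your handling of the lower bound is in fact slightly cleaner than the paper's, since you simply drop the positive terms $\ell_1\ell_3+\ell_2\ell_3$ to get $\ell_1\ell_2<k/2$ rather than eliminating $\ell_2$ via the constraint and verifying the resulting inequality by hand, and the strictness (hence non-attainment) is immediate from $\ell_3>0$.
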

\begin{proof} 
We begin with proving the second equality in \eqref{cubper}. Possibly rescaling,  it is enough to consider the  case $|\partial \Omega|=2$ which means that  the lengths  $\ell_1,\ell_2,\ell_3$ of the sides of the cuboid have to  satisfy the following constraint
\begin{equation*}
\ell_1 \ell_2 + \ell_1 \ell_3 + \ell_2 \ell_3=1.
\end{equation*}
Consider $0< \ell \leq 1/\sqrt{3}$ and  take $\ell_2=\ell_3=\ell$ and $\ell_1 = (1-\ell^2)/2 \ell$. Note that the perimeter constraint is satisfied and that  the assumptions on $\ell$ guarantee that $\ell_1\geq \ell$. It follows that 
\begin{equation}\label{infinitodue}
\lambda_1^{\Omega}= \pi^2 \left( \frac{4\ell^2}{(1-\ell^2)^2} + \frac{1}{\ell^2} \right) \xrightarrow[\ell \to 0^+]{} +\infty
\end{equation}
and the second equality in \eqref{cubper} is proved. 

We now prove the first equality in \eqref{cubper}.   By the perimeter constraint $|\partial \Omega|=k$,  the lengths  $\ell_1,\ell_2,\ell_3$ of the sides of the cuboid have to  satisfy the following constraint
$$
|\partial \Omega |= 2(\ell_1 \ell_2 + \ell_1 \ell_3 + \ell_2 \ell_3)=k
$$
which implies that 
\begin{equation}\label{elledue}
\ell_2=\frac{\frac{k}{2}-\ell_3 \ell_1}{\ell_3+\ell_1}\, .
\end{equation}
Note that assuming that  $\ell_3\le \ell_2\le  \ell_1$ is equivalent to requiring that
\begin{equation}\label{kappasesti}
\sqrt{\ell_3^2+\frac{k}{2}}-\ell_3\le \ell_1\le \frac{\frac{k}{2}-\ell_3^2}{2\ell_3},\  {\rm with}\ 0<\ell_3\le \sqrt{k/6}.
\end{equation}
 By Young's inequality we get
\begin{equation}\label{elledue1}
\lambda_1^{\Omega}=\pi^2\left( \frac{1}{\ell_1^2} +\frac{1}{\ell_2^2}\right) =   \pi^2\left(   \frac{1}{\ell_1^2} + \frac{(\ell_3+\ell_1)^2}{(\frac{k}{2}-\ell_3 \ell_1)^2}\right)
 \geq   \frac{2(\ell_3+\ell_1)\pi^2}{\ell_1\left(\frac{k}{2}- \ell_3 \ell_1\right)  }> \frac{4\pi^2}{k},
\end{equation}
which proves that 
$$
\inf_{\substack{\Omega \text{\ \rm   cuboid}  \\ |\partial\Omega|=k }} \lambda_1^\Omega\geq \frac{4 \pi^2}{k}
$$
and that the infimum cannot be a minimum. Choosing $\ell_1 = \sqrt{\ell_3^2+\frac{k}{2}}-\ell_3$ and passing to the limit as $\ell_3\to 0^+$ in the second equality of \eqref{elledue1} yield
$$
\lambda_1^{\Omega}  \xrightarrow[\ell_3 \to 0^+]{}  \pi^2\left(\frac{1}{\ell_1^2}+\frac{4\ell_1^2}{k^2}\right)= \frac{4 \pi^2}{k}
$$
and the proof of the first equality in the statement is proved. 
\end{proof}

\begin{Rem} The proof of the previous theorem shows that the infimum in \eqref{cubper} is reached by a minimizing  sequence of cuboids that degenerate to a square with side $\sqrt{k/2}$. 
\end{Rem}

\begin{Rem}\label{savorem}
Following the terminology and the notation in \cite{savo}, consider the Hodge Laplacian acting on $p$-forms with absolute boundary conditions on a bounded convex domain $\Omega$ of $\mathbb{R}^3$,  and denote by  $\mu_1^{[p]}$ the corresponding first eigenvalue. Here $p=0,1,2,3$ and $\mu_1^{[0]}\le \mu_1^{[1]}\le \mu_1^{[2]}\le \mu_1^{[3]}$. Recall from \cite{savo} that $\mu_1^{[0]}=\mu_1^{[1]}$ is the first positive  eigenvalue of the Neumann Laplacian, while $\mu_1^{[3]}$ is the first eigenvalue of the Dirichlet Laplacian. Denote by 
$\mu_1^{[p]'}$ the first eigenvalue of  the Hodge Laplacian acting on exact $p$-forms. Keeping in mind the standard identification of vector fields $u$ in ${\mathbb{R}}^3$ with $2$-forms $\omega$, the Hodge Laplacian $-\Delta \omega$ acting on $2$-forms $\omega$ corresponds to  
$\operatorname{curl}\operatorname{curl}u- \nabla \operatorname{div}u$ and the absolute boundary conditions correspond to the boundary conditions $u \times \nu =0$ on $\partial\Omega$ in \eqref{el:maxwell:pb} together with  the boundary condition $\operatorname{div}u=0$ on $\partial\Omega$ (which in our problem is included in the second condition in \eqref{el:maxwell:pb}). On the other hand,  restricting the Hodge Laplacian to  exact $2$-forms corresponds to imposing the second condition $\operatorname{div}u=0$ in $\Omega$ in \eqref{el:maxwell:pb}. We conclude that  $\mu_1^{[2]'}=\lambda_1^{\Omega}$. Importantly, by the proof of \cite[Thm.~2.6]{guesavtrans} we have that $\mu_1^{[2]'}=\mu_1^{[2]}$, see also \cite[p.~1802]{savo} (we note that this result could also be deduced by the inequality between Maxwell and Helmholtz eigenvalues proved  in   \cite[Thm.~1.1]{roh} for domains that are not necessarily convex). 

Consider now the John ellipsoid of $\Omega$, i.e., the ellipsoid of maximal volume contained in $\Omega$ and denote by 
$D_1, D_2, D_3$ the lenghts of its principal axes, ordered by $D_1\geq D_2\geq D_3$. It is proved in \cite[Thm.~1.1]{savo} that 
$$
\frac{a}{D_2^2}\le  \mu_1^{[2]} \le \frac{a'}{D_2^2}
$$
for some positive constants $a,a'$ independent of $\Omega$. By the previous observations, it follows that 
$
aD_2^{-2}\le \lambda_1^{\Omega} \le a'D_2^{-2}. 
$
Accordingly, if we consider a cuboid $\Omega$ with sides $\ell_1, \ell_2, \ell_3 $ in the order $\ell_1\geq \ell_2\geq \ell_3 $, we have that $D_2=l_2$ hence 
$$
\frac{a}{\ell_2^2}\le \lambda_1^{\Omega} \le \frac{a'}{\ell_2^2}. 
$$
Keeping the volume of the cuboid $\Omega$ fixed and letting $\ell_2$ go either to $\infty$ or to $0$ we obtain that either 
$ \lambda_1^{\Omega}\to 0$ or $ \lambda_1^{\Omega}\to \infty$, respectively. In this way, we retrieve in an alternative asymptotic form what is explicitly written in  formulas  \eqref{zero} and $\eqref{infinito}$. Similarly, one can retrieve formula \eqref{infinitodue}.

The advantage of this approach lies in the fact that, in principle, one may use not only cuboids but also more general convex domains with suitable John ellipsoids. However, using cuboids and the corresponding  explicit formulas allows us to write  one-line proofs.
\end{Rem}

\begin{Rem} \label{ball}
The ball $B_k$ of surface area  $k$ has radius $R=\sqrt{k/(4 \pi)}$, hence
\begin{equation*}
\inf_{\substack{\Omega \text{\ \rm   cuboid}  \\ |\partial\Omega|=k }} \lambda_1^\Omega =
 \frac{ 4\pi^2}{k}<\lambda_1^{B_k} = \left(\frac{a'_{1,1}}{R} \right)^2 = \frac{4\pi  (a'_{1,1})^2}{k} <\frac{12 \pi^2}{k}=\lambda_1^{Q_k} ,
\end{equation*}
where $Q_k$ is the cube with surface area  $k$. Here  $a'_{1,1} \approx 2.7437 \pm 0.0001$ is the first positive zero of the derivative $\psi'_1$ of the Riccati-Bessel function $\psi'_1$ defined by $\psi_1(z)=zj_1(z)$ where $j_1$ is  the usual spherical Bessel function of the first kind, see e.g., \cite{lamzac21}. 
Thus, from the point of view of the minimization problem under perimeter constraint,  the ball is better than the cube but it is worse than other cuboids. 

Moreover,  if $\tilde B_k$ is a ball and $\tilde{Q}_k$ is a cube both of volume  $k$,  then the radius of the ball is $R=\sqrt[3]{\frac{3k}{4\pi}}$ hence
$$
\lambda_1^{\tilde B_k} = \left(\frac{a'_{1,1}}{R} \right)^2 =    (a'_{1,1})^2 \sqrt[3]{\frac{16\pi^2}{9k^2} }   < \frac{2 \pi^2}{\sqrt[3]{k^2}}=\lambda_1^{\tilde Q_k}. 
$$
Thus, from the point of view of the minimization problem under volume  constraint, the cube is worse than the ball. 
\end{Rem}

\section{Dumbbell domains}

In this section we consider the eigenvalue problem  \eqref{el:maxwell:pb}
 on a three dimensional dumbbell domain obtained as a product of a suitable  two dimensional dumbbell domain  as in Figure~\ref{figura:h} and an interval. Namely, for any $\delta , \eta >0 $ with $\delta <1$ and  $\eta <\delta$,  we consider the two dimensional dumbbell domain  in Figure \ref{figura1} defined as follows
$\omega_{\delta ,\eta}=G\cup S_{\delta, \eta}\cup G_{\delta}$ where 
$$G=(-1,0) \times \left(\frac{\left(-1+\eta \right) }{2},\frac{1+\eta }{2} \right),\ \ S_{\delta, \eta}=[0,\delta  ] \times (0,\eta )$$
and
$$G_{\delta} = (\delta , 2\delta ) \times \left(\frac{-\delta +\eta }{2},\frac{\delta +\eta }{2}\right).$$
Then we can  prove the following theorem that makes use  of Lemma~\ref{lemma:zeroneumann} below. 

\begin{Thm} Let $\beta>0$ be fixed. For any $h>0$ sufficiently big, let $\delta (h), \eta (h)$ be two positive real numbers depending on $h$ satisfying the following conditions:
$$
\delta (h)=o(h^{-2/\beta}),\ \ {\rm and}\ \ \eta (h)=O(\delta^{3+\beta} (h)),\ \ {\rm as}\ h\to +\infty .
$$ 
Moreover,  let  $\Omega(h)$ the three dimensional dumbbell domain defined by 
$$
\Omega_h := \omega_h\times (0,h),
$$
where 
$$
\omega_h :=L(h)\omega_{\delta (h),\eta (h)}
$$
and 
the positive factor $L(h)$ is defined by the condition
$
|\partial \Omega_h|=1. 
$
Then 
$$
\lim_{h\to +\infty } \lambda_1^{ \Omega_h}=0.
$$
\end{Thm}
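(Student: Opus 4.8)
The plan is to exploit the product structure of $\Omega_h=\omega_h\times(0,h)$ together with the Costabel--Dauge description of the spectrum of \eqref{el:maxwell:pb} on cylinders, and to produce an upper bound for $\lambda_1^{\Omega_h}$ that tends to $0$. Since $\omega_{\delta,\eta}$ is simply connected, $\partial\Omega_h$ is connected, so we are within the standing assumptions and $\lambda_1^{\Omega_h}>0$; it therefore suffices to bound $\lambda_1^{\Omega_h}$ from above by a null sequence. The decisive point is that the ``transverse Neumann branch'' of the spectrum on a cylinder over $\omega$ involves the Neumann eigenvalues of $\omega$, and here the cross-section $\omega_h$ is a dumbbell with an arbitrarily thin neck, for which the first nonzero Neumann eigenvalue is tiny.

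Concretely, I would first recall from \cite{cosdau} that, for a (Lipschitz) cylinder $\omega\times(0,h)$, the spectrum of \eqref{el:maxwell:pb} is the union of a ``Dirichlet branch'' $\{\mu_k(\omega)+(j\pi/h)^2:k\ge1,\ j\ge0\}$, with $\mu_k(\omega)$ the Dirichlet eigenvalues of $-\Delta$ on $\omega$, and a ``Neumann branch'' $\{\nu_k(\omega)+(j\pi/h)^2:k\ge2,\ j\ge1\}$, with $0=\nu_1(\omega)<\nu_2(\omega)\le\cdots$ the Neumann eigenvalues of $-\Delta$ on $\omega$ (the constant transverse mode and the zero longitudinal frequency are excluded because they do not produce an admissible field in $X_N(\operatorname{div}0,\cdot)$; the eigenfield realising $\nu_k(\omega)+(j\pi/h)^2$ is $(\widehat z\times\nabla_\perp\psi_k)\sin(j\pi z/h)$ with $\psi_k$ a Neumann eigenfunction of $\omega$). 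In particular $\nu_2(\omega_h)+\pi^2/h^2$ lies in the spectrum of \eqref{el:maxwell:pb} on $\Omega_h$, so
\[
\lambda_1^{\Omega_h}\ \le\ \nu_2(\omega_h)+\frac{\pi^2}{h^2}.
\]
(The Dirichlet branch is useless here: its smallest member is of order $h^{2}$, since $\mu_1(\omega_h)=\mu_1(\omega_{\delta(h),\eta(h)})L(h)^{-2}$ and $L(h)\asymp h^{-1}$ below.)

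Next I would handle the scaling. By dilation invariance $\nu_2(\omega_h)=\nu_2(\omega_{\delta(h),\eta(h)})\,L(h)^{-2}$, and $L(h)$ is pinned down by the perimeter normalisation: from $|\partial\Omega_h|=2|\omega_h|+h\,|\partial\omega_h|=2L(h)^2\,|\omega_{\delta,\eta}|+h\,L(h)\,|\partial\omega_{\delta,\eta}|=1$, and since $|\omega_{\delta,\eta}|$ and $|\partial\omega_{\delta,\eta}|$ stay bounded above and below by positive constants as $h\to\infty$ (they converge to $1$ and to the perimeter of the unit square $G$, respectively), solving this quadratic gives $L(h)\asymp h^{-1}$, hence $L(h)^{-2}=O(h^2)$. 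The remaining ingredient is Lemma~\ref{lemma:zeroneumann}, which I expect to furnish a bound of the form $\nu_2(\omega_{\delta,\eta})\le C\,\eta\,\delta^{-3}$ for all small $\delta,\eta$: test the Rayleigh quotient for the Neumann Laplacian with the function $v$ equal to a constant $b$ on the small square $G_\delta$, equal to $a$ on the large square $G$ with $a$ chosen so that $\int_{\omega_{\delta,\eta}}v=0$ (thus $|a|\asymp\delta^2|b|$), and interpolated linearly in $x$ along the neck $S_{\delta,\eta}$; then $\int|\nabla v|^2\asymp(b-a)^2\eta/\delta\asymp\eta/\delta$ while $\int v^2\ge b^2\delta^2\asymp\delta^2$, so the quotient is $\lesssim\eta/\delta^3$. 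Combining,
\[
\lambda_1^{\Omega_h}\ \le\ \nu_2(\omega_{\delta(h),\eta(h)})\,L(h)^{-2}+\frac{\pi^2}{h^2}\ \le\ C\,h^{2}\,\frac{\eta(h)}{\delta(h)^{3}}+\frac{\pi^{2}}{h^{2}}\ =\ O\!\bigl(h^{2}\delta(h)^{\beta}\bigr)+O\!\bigl(h^{-2}\bigr),
\]
where the first inequality uses $L(h)^{-2}=O(h^2)$ and Lemma~\ref{lemma:zeroneumann}, and the equality uses $\eta(h)=O(\delta(h)^{3+\beta})$. Since $\delta(h)=o(h^{-2/\beta})$ forces $\delta(h)^{\beta}=o(h^{-2})$, the right-hand side tends to $0$, which proves the theorem.

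The step I expect to be the main obstacle is isolating from \cite{cosdau} precisely the transverse Neumann branch of the $\operatorname{curl}\operatorname{curl}$ spectrum on the cylinder and justifying that $\nu_2(\omega_h)+\pi^2/h^2$ is a genuine eigenvalue of \eqref{el:maxwell:pb} on $\Omega_h$ — which includes verifying that $\omega_{\delta,\eta}$, hence $\Omega_h$, is a Lipschitz domain so that the variational framework and \cite{cosdau} apply — together with the dumbbell estimate behind Lemma~\ref{lemma:zeroneumann}. Everything else is bookkeeping with the three parameters $h,\delta,\eta$; it is worth noting that the exponents $3+\beta$ in $\eta(h)$ and $2/\beta$ in $\delta(h)$ are calibrated exactly so that the gain $\eta/\delta^3\lesssim\delta^{\beta}$ coming from the thin neck beats the loss $L(h)^{-2}=O(h^2)$ forced by keeping $|\partial\Omega_h|$ fixed.
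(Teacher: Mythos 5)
Your argument is correct and follows essentially the same route as the paper: the perimeter normalisation forces $L(h)\sim 1/(4h)$, the Costabel--Dauge product formula reduces everything to $\mu_{1,\mathcal{N}}^{\omega_h}+\pi^2/h^2$ (the paper states this as an equality via $\lambda_1^{\Omega_h}=\min\{\mu_{1,\mathcal{D}}^{\omega_h},\,\mu_{1,\mathcal{N}}^{\omega_h}+\pi^2/h^2\}$, while you only need the upper bound), and the dumbbell test-function estimate $\mu_{1,\mathcal{N}}^{\omega_{\delta,\eta}}=O(\eta/\delta^3)=O(\delta^\beta)$ is exactly the content and proof of the paper's Lemma~\ref{lemma:zeroneumann}. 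The exponent bookkeeping $h^2\delta(h)^\beta=o(1)$ matches the paper's conclusion.
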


\begin{proof}

\begin{figure}[b] 
\centering
\begin{tikzpicture} 
\draw (0,0) -- (4,0) -- (4,1.80);
\draw (4,2.20) -- (4,4) -- (0,4) -- (0,0);

\draw (4,1.80) -- (5.15,1.80);
\draw (4,2.20) -- (5.15,2.20);

\draw (5.15,1.80) -- (5.15,1.425) -- (6.3,1.425) -- (6.3,2.575) -- (5.15,2.575) -- (5.15,2.20);

\coordinate (a) at (0,0);
\coordinate (b) at (0,4);
\draw[<->] ([xshift=-0.3cm]a) -- ([xshift=-0.3cm]b) node[midway, left]{$\scriptstyle L(h)$};

\coordinate (c) at (4.1,1.80);
\coordinate (d) at (5.05,1.80);
\draw[<->] ([yshift=-0.2cm]c) -- ([yshift=-0.2cm]d) node[midway, below]{$\scriptstyle L(h)\delta(h)$};

\coordinate (e) at (6.3,1.425);
\coordinate (f) at (6.3,2.575);
\draw[<->] ([xshift=0.2cm]e) -- ([xshift=0.2cm]f) node[midway, right]{$\scriptstyle L(h) \delta(h)$};

\coordinate (g) at (4,1.82);
\coordinate (h) at (4,2.18);
\draw[<->] ([xshift=-0.2cm]g) -- ([xshift=-0.2cm]h) node[midway, left]{$\scriptstyle L(h) \eta(h)$};
\end{tikzpicture}
\caption{The base domain $\omega_h$} \label{figura:h}
\end{figure}
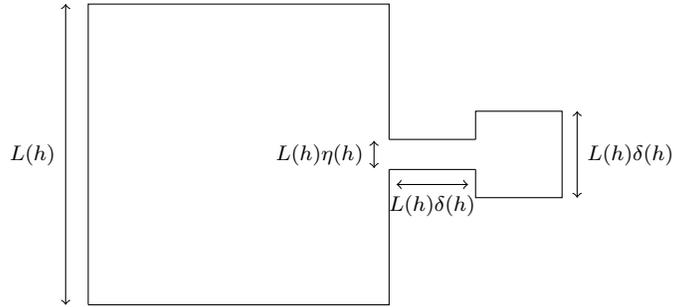

In the sequel, for the sake of simplicity, we shall often  omit  the dependence on $h$ of the quantities involved.

The surface area of $\partial \Omega_h$ is equal to 
\begin{equation*}
|\partial\Omega_h| = 2|\omega_h| + h  |\partial\omega_h| = 2L^2\left( 1+\delta(\delta+ \eta)\right) +2 h L ( 2 + 3 \delta -\eta). 
\end{equation*}
Imposing the constraint  $|\partial\Omega_h|=1$  is equivalent to   assuming that
\begin{equation*}
L=L(h) = \frac{1}{\sqrt{A^2+B}+A}
\end{equation*}
where
\begin{equation*}
A= h(2+3\delta-\eta) \qquad \text{ and } \qquad B = 2\left(1+\delta(\delta+\eta)\right) .
\end{equation*}
Note that $A\sim 2h$ and  $B\sim 2$ as $h\to +\infty$. Therefore $L \sim \frac{1}{4h}$ as $h\to +\infty$.
Note also   that $\eta <\delta <1$ for $h>0$ large enough.

Denoting by $\mu_{1,\mathcal{N}}^{U}$ the first positive eigenvalue of the Neumann Laplacian on a bounded Lipschitz domain $U$, by Lemma~\ref{lemma:zeroneumann} and our assumptions on $\delta$ and $\eta$ it follows  that
\begin{equation*}
\mu_{1,\mathcal{N}}^{\omega_h} = \frac{\mu_{1,\mathcal{N}}^{\omega_{\delta , \eta} }}{L^2} = \frac{O(\delta^{\beta}) }{L^2} = h^2 O(\delta^{\beta})=h^2o(h^{-2})\to 0\ \ {\rm as}\ \ h\to+\infty\, .\end{equation*}

We recall that for a product domain $\Omega = \omega \times I$ where  $\omega$ is a bounded Lipschitz domain in $\R^2$ and $I =(0,h)$ is an interval of length $h>0$, we have 
\begin{equation}  \label{1stmaxwdicotomy}
\lambda_1^\Omega = \min\bigg\{ \mu_{1,\mathcal{D}}^\omega,\  \mu_{1,\mathcal{N}}^\omega + \frac{\pi^2}{h^2}\bigg\},
\end{equation}
where $ \mu_{1,\mathcal{D}}^\omega$ is the first eigenvalue of the Dirichlet Laplacian on $\omega$, see \cite[Thm. 1.4]{cosdau}.
Clearly, since the area of $ \omega^h$ vanishes as $h\to +\infty$, we have that $  \mu_{1,\mathcal{D}}^{\omega_h}\to +\infty$, hence
\begin{equation*}
\lambda_1^{\Omega_h} = \mu_1^{\omega_h} + \frac{\pi^2}{h^2} \to 0
\end{equation*}
as $h\to +\infty$. 
\end{proof}

The following lemma is about a celebrated  example from \cite[Ch.VI \S 2]{couhil} that shows that the first Neumann eigenvalue on a certain dumbbell domain goes to zero as the channel of the dumbbell shrinks  fast enough. Since we need precise information on the decay rate, we include a proof for the convenience of the reader. 

\begin{Lem} \label{lemma:zeroneumann} Let   $\mu_{1,\mathcal{N}}^{\omega_{\delta, \eta}}$ be the first positive eigenvalue of the Neumann Laplacian on the two dimensional domain $\omega_{\delta , \eta }$ with $0<\eta<\delta<1$, see Figure \ref{figura1}.
If $\eta=o(\delta^3)$  then $\mu_{1,\mathcal{N}}^{\omega_{\delta, \eta}}\to 0$ as $\delta \to 0$. In particular, if  $\eta = O(\delta^{3 +\beta})$ with $\beta>0$, then
\begin{equation*}
 \mu_{1,\mathcal{N}}^{\omega_{\delta, \eta}} = O(\delta^\beta)
\end{equation*}
as $\delta\to 0$.
\end{Lem}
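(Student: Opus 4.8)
The plan is to use the variational characterization of the first positive Neumann eigenvalue,
\begin{equation*}
\mu_{1,\mathcal{N}}^{\omega_{\delta,\eta}} = \min\left\{ \frac{\int_{\omega_{\delta,\eta}} |\nabla v|^2\,dx}{\int_{\omega_{\delta,\eta}} |v|^2\,dx} \; : \; v \in H^1(\omega_{\delta,\eta}),\ \int_{\omega_{\delta,\eta}} v\,dx = 0,\ v \not\equiv 0 \right\},
\end{equation*}
and to produce an explicit competitor $v$ whose Rayleigh quotient is $O(\delta^\beta)$. The natural choice, following Courant--Hilbert, is a function that is essentially constant on each of the two square-like lobes $G$ and $G_\delta$, taking values $c_1$ and $c_2$ with $c_1 \ne c_2$, and that interpolates linearly (in the horizontal variable) across the thin neck $S_{\delta,\eta}=[0,\delta]\times(0,\eta)$. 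The constants $c_1,c_2$ are then fixed by the two requirements that the orthogonality condition $\int v = 0$ holds and that $v$ is not identically zero; since $|G| \asymp 1$ while $|G_\delta| \asymp \delta^2$, this forces $c_1 = O(\delta^2)$ and $c_2 = O(1)$, so after normalizing $\int v^2 = 1$ one still has $\int v^2 \gtrsim 1$ coming from the large lobe $G$.

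The key estimate is the numerator. The gradient of $v$ vanishes on both lobes (where $v$ is locally constant) and on the neck the horizontal linear interpolation has $|\nabla v| = |c_2 - c_1|/\delta = O(1/\delta)$, so
\begin{equation*}
\int_{\omega_{\delta,\eta}} |\nabla v|^2\,dx = \int_{S_{\delta,\eta}} |\nabla v|^2\,dx = O\!\left(\frac{1}{\delta^2}\right) \cdot |S_{\delta,\eta}| = O\!\left(\frac{1}{\delta^2}\right)\cdot \delta\,\eta = O\!\left(\frac{\eta}{\delta}\right).
\end{equation*}
Combining with $\int v^2 \gtrsim 1$ gives $\mu_{1,\mathcal{N}}^{\omega_{\delta,\eta}} = O(\eta/\delta)$. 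Under the hypothesis $\eta = o(\delta^3)$ this is $o(\delta^2) \to 0$ as $\delta \to 0$, which is stronger than what is claimed; and under $\eta = O(\delta^{3+\beta})$ it gives $O(\delta^{2+\beta})$, again stronger than the asserted $O(\delta^\beta)$. (The weaker bounds in the statement are presumably all that is needed downstream, and one could of course also absorb the slack by allowing $v$ to vary slightly inside the lobes; I would simply record the sharp bound $O(\eta/\delta)$ and deduce both claims from it.) One technical point is that $v$ as described is only Lipschitz across the interfaces $\{x_1 = 0\}$ and $\{x_1 = \delta\}$ where the thin neck meets the fat lobes — there the linear profile on the neck does not match the constant on the lobe along the full vertical extent — so strictly speaking one should insert thin transition layers, or equivalently observe that $v \in H^1$ because it is Lipschitz on $\omega_{\delta,\eta}$ (the trace from the neck side and from the lobe side agree on the one-dimensional interface, up to a set of measure zero, once one uses a continuous piecewise-linear profile).

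The main obstacle is therefore not the energy bound, which is elementary, but checking that the trial function genuinely lies in $H^1(\omega_{\delta,\eta})$ and has a well-controlled $L^2$ norm from below uniformly in $\delta$; both reduce to bookkeeping with the explicit rectangles $G$, $S_{\delta,\eta}$, $G_\delta$. I would organize the proof as: (i) define $v$ piecewise and verify $v \in H^1(\omega_{\delta,\eta})$; (ii) choose $c_1, c_2$ to enforce $\int v = 0$ and compute $\int v^2 \asymp 1$; (iii) compute $\int |\nabla v|^2 = O(\eta/\delta)$; (iv) conclude via the Rayleigh quotient and substitute the two hypotheses on $\eta$. A remark at the end can note that the scaling $\mu_{1,\mathcal{N}}^{L\omega} = L^{-2}\mu_{1,\mathcal{N}}^{\omega}$ is what links this lemma to the preceding theorem.
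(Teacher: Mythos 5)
Your overall strategy is the same as the paper's: a Courant--Hilbert trial function that is constant on each lobe, linear in the horizontal variable across the neck, with the constants fixed by the mean-zero condition, plugged into the Rayleigh quotient. (Your worry about membership in $H^1$ is unfounded: the profile depends only on $x$, is continuous at $x=0$ and $x=\delta$ by construction, hence globally Lipschitz on $\omega_{\delta,\eta}$; no transition layers are needed.) However, there is a concrete error in your estimate of the denominator, and it propagates into a final bound that is actually false.

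You claim that ``$\int v^2\gtrsim 1$ coming from the large lobe $G$.'' But with your own normalization the mean-zero condition forces $c_1=O(\delta^2)$ on $G$ (since $|G|\asymp 1$ and $|G_\delta|\asymp\delta^2$), so the large lobe contributes only $c_1^2|G|=O(\delta^4)$ to the $L^2$ norm; the mass sits in the \emph{small} lobe, where $c_2^2|G_\delta|\asymp\delta^2$. Hence $\int v^2\asymp\delta^2$, not $\gtrsim 1$, and the Rayleigh quotient of this trial function is
\begin{equation*}
\frac{O(\eta/\delta)}{\Theta(\delta^2)}=O\!\left(\frac{\eta}{\delta^3}\right),
\end{equation*}
not $O(\eta/\delta)$. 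This is exactly the bound the paper obtains (there the small-lobe value is taken to be $-1/\delta$, which normalizes the denominator to $\asymp 1$ and puts the factor $\delta^{-2}$ into the gradient instead). The corrected bound $O(\eta/\delta^3)$ still yields both assertions of the lemma: it is $o(1)$ when $\eta=o(\delta^3)$ and $O(\delta^\beta)$ when $\eta=O(\delta^{3+\beta})$. But your ``sharper'' conclusions $o(\delta^2)$ and $O(\delta^{2+\beta})$ are not only unproved, they are wrong: the standard dumbbell asymptotics give $\mu_{1,\mathcal N}^{\omega_{\delta,\eta}}\asymp \frac{\eta}{\delta}\bigl(|G|^{-1}+|G_\delta|^{-1}\bigr)\asymp \eta/\delta^3$, which is much larger than $\eta/\delta$ for small $\delta$. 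You should replace the denominator estimate and record $O(\eta/\delta^3)$ as the output of the construction.
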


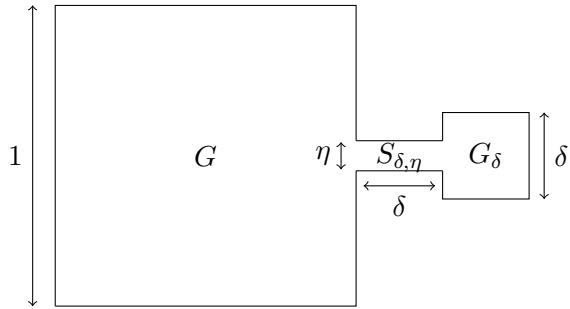
\begin{figure}[b] 
\centering
\begin{tikzpicture} 
\draw (0,0) -- (4,0) -- (4,1.80);
\draw (4,2.20) -- (4,4) -- (0,4) -- (0,0);

\draw (4,1.80) -- (5.15,1.80);
\draw (4,2.20) -- (5.15,2.20);

\draw (5.15,1.80) -- (5.15,1.425) -- (6.3,1.425) -- (6.3,2.575) -- (5.15,2.575) -- (5.15,2.20);

\coordinate (a) at (0,0);
\coordinate (b) at (0,4);
\draw[<->] ([xshift=-0.3cm]a) -- ([xshift=-0.3cm]b) node[midway, left]{1};

\coordinate (c) at (4.1,1.80);
\coordinate (d) at (5.05,1.80);
\draw[<->] ([yshift=-0.2cm]c) -- ([yshift=-0.2cm]d) node[midway, below]{$\delta$};

\coordinate (e) at (6.3,1.425);
\coordinate (f) at (6.3,2.575);
\draw[<->] ([xshift=0.2cm]e) -- ([xshift=0.2cm]f) node[midway, right]{$\delta$};

\coordinate (g) at (4,1.82);
\coordinate (h) at (4,2.18);
\draw[<->] ([xshift=-0.2cm]g) -- ([xshift=-0.2cm]h) node[midway, left]{$\eta$};

\node[] at (2,2)  {$G$};
\node[] at (4.575,1.975)  {$S_{\delta,\eta}$};
\node[] at (5.725,2)  {$G_\delta$};
\end{tikzpicture}

\caption{the domain $\omega_{\delta , \eta}$} \label{figura1}
\end{figure}

\begin{proof}
By the min-max characterization of the eigenvalues we have
\begin{equation*}
\mu_1^{\omega_{\delta, \eta}} = \inf_{\substack{u \in H^1(\omega_{\delta, \eta}) \\ \int_{\omega_{\delta, \eta} } u=0}} \frac{\int_{\omega_{\delta, \eta}} |\nabla u|^2}{\int_{\omega_{\delta, \eta}} u^2}.
\end{equation*}
Consider  the following function
\begin{equation*}
u(x,y):=
\begin{cases}
c, & (x,y) \in G,\\
c-\frac{1}{\delta}\left( \frac{1}{\delta} + c \right) x, & (x,y) \in S_{\delta, \eta},\\
-\frac{1}{\delta}, & (x,y) \in G_\delta, 
\end{cases}
\end{equation*}
where $c \in \mathbb{R}$ is a constant yet to be determined.
Note that  $u$ is Lipschitz continuous hence it belongs to $H^1(\omega_{\delta, \eta})$.
Since
\begin{equation*}
\begin{split}
\int_{\omega_{\delta, \eta}} u &= c - \delta + \eta \delta c - \frac{\eta}{\delta} \left( \frac{1}{\delta} +c \right) \int_0^\delta x dx =c - \delta + \frac{1}{2}\eta \delta  c -\frac{\eta}{2},
\end{split}
\end{equation*}
in order to guarantee that  $\int_{\omega_{\delta, \eta}   } u=0$, we set $c=c(\eta,\delta)=\frac{\eta+2\delta}{\eta \delta + 2}$.
Observe that $\lim_{\delta \to 0} c=0$. Moreover
\begin{equation*}
\nabla u (x,y) = 
\begin{cases}
(0,0), & (x,y) \in G,\\
(-\frac{1}{\delta}\left( \frac{1}{\delta} + c \right),0), & (x,y) \in S_{\delta, \eta},\\
(0,0), & (x,y) \in G_\delta.
\end{cases}
\end{equation*}
Therefore
\begin{equation*}
\int_{\omega_{\delta ,h}} |\nabla u|^2= \frac{\eta}{\delta}\left( \frac{1}{\delta} + c \right)^2 =  \frac{\eta}{\delta}  \left( \frac{1}{\delta} + \frac{\eta+2\delta}{\eta\delta +2} \right)^2 =O\left( \frac{\eta}{\delta^3}\right) \quad \text{ as } \delta \to 0,
\end{equation*}
while
\begin{equation*}
\begin{split}
\int_{\omega_{\delta, \eta}} u^2 &= c^2+1 + \eta \int_0^\delta \left( c-\frac{1}{\delta}\left( \frac{1}{\delta} + c \right) x \right)^2 dx 
\\
&=c^2+1 + \frac{\eta}{3\delta} (1-c \delta +c^2 \delta^2) \xrightarrow[\delta \to 0]{}1+ \frac{1}{3}\lim_{\delta \to 0} \frac{\eta}{\delta}.
\end{split}
\end{equation*}
Assuming $\eta=o(\delta^3)$ as $\delta \to 0$, we have 
\begin{equation*}
0<\mu_1^{\omega_{\delta ,\eta}} \leq \frac{\int_{\omega_{\delta, \eta}} |\nabla u|^2dx}{\int_{\omega_{\delta, \eta}} u^2dx} = O\left( \frac{\eta}{\delta^3}\right) \xrightarrow[\delta \to 0]{} 0.
\end{equation*}
Finally, choosing  $\eta=O( \delta^{3+\beta})$ with $\beta>0$ as $\delta \to 0$, we get that
$
\mu_1^{\omega_{\delta ,\eta} } =  O(\delta^\beta)
$
as $\delta \to 0$.
\end{proof}

\section*{Acknowledgments}
The authors are very thankful to Luigi Provenzano for suggesting \cite[Thm.~1.1]{savo} and the alternative proof discussed in Remark~\ref{savorem}. They are also very thankful to  Giovanni Franzina for pointing our reference \cite{guesav} and useful discussions. 

D.K.\ was supported by the EXPRO grant No.~20-17749X 
of the Czech Science Foundation.
P.D.L.  acknowledges the support  from the project ``Perturbation problems and asymptotics for elliptic differential equations: variational and potential theoretic methods'' funded by the MUR Progetti di Ricerca di Rilevante Interesse Nazionale (PRIN) Bando 2022 grant 2022SENJZ3.  
P.D.L is  member of the ``Gruppo Nazionale per l’Analisi Matematica, la Probabilit\`a e le loro Applicazioni'' (GNAMPA) of the ``Istituto Nazionale di Alta Matematica'' (INdAM). M.Z.   acknowledges  financial support by INdAM   through their program ``Borse di studio per l'estero''. 
The work of   M.Z. was partially realized during his stay at the  Czech Technical University in Prague with the support of the EXPRO grant No. 20-17749X of the Czech Science Foundation.
M.Z. also received support from the French government under the France 2030 investment plan, as part of the Initiative d'Excellence d'Aix-Marseille Université -- A*MIDEX  AMX-21-RID-012.

\end{document}